\def\ra{\rightarrow}
\def\qed{{\hfill $\Box$}}
\newtheorem{theorem}{THEOREM}[section]
\newtheorem{corollary}[theorem]{Corollary}
\newtheorem{proposition}[theorem]{Proposition}
\newtheorem{lemma}[theorem]{Lemma}
\theoremstyle{definition}
\newtheorem{definition}[theorem]{Definition}
\theoremstyle{remark}
\newtheorem{remark}[theorem]{Remark}
\newcommand\CC{{\mathbb C}}
\newcommand\RR{{\mathbb R}}
\def\GL{\mathop{\rm GL}\nolimits}
\def\Re{\mathop{\rm Re}\nolimits}
\def\blfootnote{\xdef\@thefnmark{}\@footnotetext}
\begin{document}

\title[Zero CR-curvature of Levi degenerate hypersurfaces]{Zero CR-curvature equations
\vspace{0.1cm}\\
for Levi degenerate hypersurfaces
\vspace{0.1cm}\\
via Pocchiola's invariants}\blfootnote{{\bf Mathematics Subject Classification:} 32V05, 32V20, 35J96, 34A05, 34A26.} \blfootnote{{\bf Keywords:} CR-curvature, tube and rigid hypersurfaces, the Monge-Amp\`ere equation, the Monge equation, Pocchiola's invariants.}
\author[Isaev]{Alexander Isaev}

\address{Mathematical Sciences Institute\\
Australian National University\\
Canberra, Acton, ACT 2601, Australia}
\email{alexander.isaev@anu.edu.au}

\maketitle

\thispagestyle{empty}

\pagestyle{myheadings}

\begin{abstract} 
In articles \cite{I2, I3} we studied tube hypersurfaces in $\CC^3$ that are 2-nondegenerate and uniformly Levi degenerate of rank 1. In particular, we showed that the vanishing of the CR-curvature of such a hypersurface is equivalent to the Monge equation with respect to one of the variables. In the present paper we provide an alternative shorter derivation of this equation by utilizing two invariants discovered by S. Pocchiola. We also investigate Pocchiola's invariants in the rigid case and give a partial classification of rigid 2-nondegenerate uniformly Levi degenerate of rank 1 hypersurfaces with vanishing CR-curvature. 
\end{abstract}

\section{Introduction}\label{intro}
\setcounter{equation}{0}

This paper is a continuation of articles \cite{I2, I3}, and we will extensively refer the reader to these papers in what follows. In particular, a brief review of CR-geometric concepts is given in \cite[Section 2]{I2}, and we will make use of those concepts without further reference.

We consider connected $C^{\infty}$-smooth real hypersurfaces in $\CC^n$ with $n\ge 2$. First, we look at \emph{tube hypersurfaces} or simply \emph{tubes}, that is, locally closed real submanifolds of the form
$$
M={\mathcal S}+i\RR^n,
$$
where ${\mathcal S}$ is a $C^{\infty}$-smooth locally closed hypersurface in $\RR^n\subset\CC^n$ called the \emph{base} of $M$. Two tube hypersurfaces are said to be \emph{affinely equivalent} if there is an affine transformation of $\CC^n$ of the form
\begin{equation}
z\mapsto Az+b,\quad A\in\GL_n(\RR),\quad b\in\CC^n,\label{affequiv}
\end{equation}
mapping one hypersurface onto the other (which takes place if and only if the bases of the tubes are affinely equivalent as submanifolds of $\RR^n$).

There has been a significant effort to study the interplay between the CR-geometric and affine-geometric properties of tubes (see \cite[Section 1]{I2} for a bibliography on this subject). Specifically, the following question has attracted substantial interest:
\vspace{-0.5cm}\\

$$
\begin{array}{l}
\hspace{0.2cm}\hbox{$(*)$ when does (local or global) CR-equivalence of tubes imply}\\
\vspace{-0.3cm}\hspace{0.8cm}\hbox{affine equivalence?}\\
\end{array}
$$
\vspace{-0.4cm}\\

\noindent Up until recently, an acceptable answer to the above question has only been known for Levi nondegenerate tube hypersurfaces that are in addition \emph{CR-flat}, that is, have identically vanishing CR-curvature (see \cite{I1} for an exposition of the existing theory). In an attempt to relax the Levi nondegeneracy assumption, in article \cite{I2} we initiated an investigation of question $(*)$ for a class of Levi degenerate 2-nondegenerate tube hypersurfaces while still assuming CR-flatness. As part of our considerations, we analyzed CR-curvature for this class.

We note that CR-curvature is defined in situations when the CR-structures in question are reducible to absolute parallelisms with values in a Lie algebra ${\mathfrak g}$. Indeed, suppose we have a class ${\mathfrak C}$ of CR-manifolds. Then the CR-structures in ${\mathfrak C}$ are said to \emph{reduce to ${\mathfrak g}$-valued absolute parallelisms} if to every $M\in{\mathfrak C}$ one can assign a fiber bundle ${\mathcal P}_M\ra M$ and an absolute parallelism $\omega_M$ on ${\mathcal P}_M$ such that for every $p\in {\mathcal P}_M$ the parallelism establishes an isomorphism between $T_p({\mathcal P}_M)$ and ${\mathfrak g}$ and for all $M_1,M_2\in{\mathfrak C}$ the following holds: 

\noindent (i) every CR-isomorphism $f:M_1\ra M_2$ can be lifted to a diffeomorphism\linebreak $F: {\mathcal P}_{M_{{}_1}}\ra{\mathcal P}_{M_{{}_2}}$ satisfying
\begin{equation}
F^{*}\omega_{M_{{}_2}}=\omega_{M_{{}_1}},\label{eq8}
\end{equation}
and 

\noindent (ii) any diffeomorphism $F: {\mathcal P}_{M_{{}_1}}\ra{\mathcal P}_{M_{{}_2}}$ satisfying (\ref{eq8}) 
is a bundle isomorphism that is a lift of a CR-isomorphism $f:M_1\ra M_2$. 

In this situation one considers the ${\mathfrak g}$-valued \emph{CR-curvature form}
$$
\Omega_M:=d\omega_M-\frac{1}{2}\left[\omega_M,\omega_M\right],\label{genformulacurvature}
$$
and the CR-flatness of $M$ is the condition of the identical vanishing of $\Omega_M$ on the bundle ${\mathcal P}_M$.
 
Reducing CR-structures to absolute parallelisms goes back to \'E. Cartan  who produced reduction for all 3-dimensional Levi nondegenerate CR-hyper\-surfaces (see \cite{C}). Since then there have been many developments under the assumption of Levi nondegeneracy (see \cite[Section 1]{I2} for references). On the other hand, reducing \emph{Levi degenerate} CR-structures has proved to be quite hard, and the first result for a reasonably large class of Levi degenerate manifolds only appeared in 2013 in our paper \cite{IZ}. Specifically, we looked at the class ${\mathfrak C}_{2,1}$ of connected 5-dimensional CR-hypersurfaces that are 2-nondegenerate and uniformly Levi degenerate of rank 1 and proved that the CR-structures in ${\mathfrak C}_{2,1}$ reduce to ${\mathfrak{so}}(3,2)$-valued parallelisms. Alternative constructions were presented in \cite{MS}, \cite{MP}, \cite{Poc} (see also \cite{Por}, \cite{PZ} for reduction in higher-dimensional cases). One of the results of \cite{IZ} is that a manifold $M\in{\mathfrak C}_{2,1}$ is CR-flat (with respect to our reduction) if and only if in a neighborhood of its every point $M$ is CR-equivalent to an open subset of the \emph{tube hypersurface over the future light cone in $\RR^3$}
$$
M_0:=\left\{(z_1,z_2,z_3)\in\CC^3\mid (\Re z_1)^2+(\Re z_2)^2-(\Re z_3)^2=0,\,\ \Re z_3>0\right\}.\label{light}
$$

Now, the main result of \cite{I2} (see \cite[Theorem 1.1]{I2}) asserts that every CR-flat tube hypersurface in ${\mathfrak C}_{2,1}$ is affinely equivalent to an open subset of $M_0$. This conclusion is a complete answer to question $(*)$ in the situation at hand and is in stark contrast to the Levi nondegenerate case where the CR-geometric and affine-geometric classifications are different even in low dimensions.

The key part of our arguments in \cite{I2} was to write the zero CR-curvature equations for tubes in the class ${\mathfrak C}_{2,1}$ in convenient form (see Theorem \ref{flatness} below). Interestingly, as we showed in the follow-up paper \cite{I3}, these equations are equivalent to a \emph{single} partial differential equation, which we will now describe. Let $M$ be a tube hypersurface in the class ${\mathfrak C}_{2,1}$. Up to affine equivalence, the base of $M$ is given locally by the graph of a function of two variables as in (\ref{basiceq}), where condition (\ref{rho11}) is satisfied. For this local representation the equation of zero CR-curvature for $M$ is the well-known \emph{Monge equation} with respect to the first variable shown in (\ref{veryfinalthetasss}). Recall that the classical single-variable Monge equation admits a nice geometric interpretation: it describes all planar conics (see, e.g., \cite[pp.~51--52]{Lan}, \cite{Las}). Furthermore, as explained in \cite{I2}, all solutions of (\ref{veryfinalthetasss}) can be explicitly found as well, and every solution yields a tube hypersurface affinely equivalent to an open subset of $M_0$.

Unfortunately, the calculations in \cite{I2} that lead to equation (\ref{veryfinalthetasss}) are quite hard and not very transparent as they rely on the construction of \cite{IZ}, which is rather involved. It is therefore desirable to find a more elementary way of obtaining this equation. The first result of the present paper is an easier derivation of (\ref{veryfinalthetasss}) by utilizing two CR-invariants, called $J$ and $W$, introduced by S. Pocchiola in \cite{Poc} for real hypersurfaces in $\CC^3$ in the class ${\mathfrak C}_{2,1}$. These invariants are given explicitly (albeit by very lengthy formulas) in terms of a graphing function, and CR-flatness is equivalent to their simultaneous vanishing:
\begin{equation}
\left\{\begin{array}{l}
J=0,\\
\vspace{-0.3cm}\\
W=0.
\end{array}\right.\label{condsjw}
\end{equation}
As shown in Section \ref{main}, equation (\ref{veryfinalthetasss}) is a relatively easy consequence of system (\ref{condsjw}) in the tube case. Moreover, the calculations of Section \ref{main} yield a short proof of the following fact, which was initially established in \cite{I2} by a much longer argument:

\begin{theorem}\label{flatness}
Let $M$ be a tube hypersurface in $\CC^3$ and assume that $M\in{\mathfrak C}_{2,1}$. Fix $x\in M$ and represent $M$ near $x$, up to affine equivalence, by {\rm (\ref{basiceq})}, where the function $\rho$ satisfies {\rm (\ref{rho11})}. Then the vanishing of the CR-curvature of this local representation is equivalent to the pair of equations
\begin{equation}
\left\{\begin{array}{l}
\hbox{the Monge equation w.r.t. the first variable {\rm (\ref{veryfinalthetasss})}},\\
\vspace{-0.1cm}\\
\displaystyle\frac{\partial S}{\partial t_1}=0,
\end{array}\right.\label{threeeeqs}
\end{equation}
where $S$ is the function defined in {\rm (\ref{functions})}. 
\end{theorem}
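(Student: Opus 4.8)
The plan is to derive the pair of equations \eqref{threeeeqs} directly from Pocchiola's criterion \eqref{condsjw}, bypassing the involved reduction of \cite{IZ}. Since CR-flatness of a manifold in ${\mathfrak C}_{2,1}$ is equivalent to the simultaneous vanishing $J=0$, $W=0$ of Pocchiola's two invariants, the whole matter comes down to specializing the (very lengthy) general formulas for $J$ and $W$ to the tube setting and simplifying. The decisive simplification is the translation symmetry of a tube in the imaginary directions: a graphing function for $M$ can be chosen to depend only on the real parts $t_1,t_2$ of the complex variables, so that every term in the generic expressions for $J$ and $W$ carrying a dependence on the imaginary variables drops out, and both invariants collapse to comparatively short differential polynomials in $\rho$ and its partial derivatives with respect to $t_1$ and $t_2$.

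First I would fix the local normal form \eqref{basiceq} subject to \eqref{rho11}, thereby pinning down the graphing function $\rho$ and the real coordinates $t_1,t_2$, and record the auxiliary functions of \eqref{functions}, in particular $S$. Next I would substitute this tube data into Pocchiola's formulas for $J$ and $W$ and carry out the symmetry reduction described above. The key bookkeeping step is to track the factors that multiply the resulting differential expressions: the equivalence of \eqref{condsjw} with \eqref{threeeeqs} is an equivalence of vanishing loci, so a prefactor may be discarded only after it has been checked to be nowhere zero under the nondegeneracy condition \eqref{rho11}.

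With the specialized forms in hand, I would show that the vanishing of $J$ and $W$ is captured, modulo nonvanishing prefactors, by two scalar equations: one of them a nonzero multiple of the classical Monge expression in the variable $t_1$, whose vanishing is precisely \eqref{veryfinalthetasss}, and the other reducing — possibly only after the Monge equation has been invoked to eliminate cross terms — to $\partial S/\partial t_1$. Combining these two statements shows that $\{J=0,\,W=0\}$ holds if and only if both equations of \eqref{threeeeqs} hold; together with the equivalence of \eqref{condsjw} with CR-flatness, this yields the theorem.

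The main obstacle is purely computational: even after the tube symmetry removes the imaginary-variable dependence, Pocchiola's expressions remain formidable, and reducing them to the clean pair \eqref{threeeeqs} demands careful algebra — isolating and factoring the nonvanishing prefactors, and recognizing the Monge operator inside the resulting differential polynomial. A secondary subtlety is that the two final equations are not obtained by reading off $J$ and $W$ separately in one-to-one fashion; one expects the condition $\partial S/\partial t_1=0$ to emerge only once the Monge equation has been used to simplify the remaining invariant, so the equivalence must be argued at the level of the full systems rather than term by term.
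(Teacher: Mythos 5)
Your overall strategy --- replace the reduction of \cite{IZ} by Pocchiola's criterion (\ref{condsjw}), specialize $J$ and $W$ to the tube setting, and then match the vanishing loci with (\ref{threeeeqs}) --- is the right one and is exactly the paper's starting point. But the proposal has a genuine gap at its central step, and the logical order you propose is reversed relative to the one that actually works. In the tube-specialized formulas (\ref{expljw}), the invariant $J$ is \emph{not} a nonvanishing multiple of the Monge expression: it contains numerous terms in $S_1$, $S_{11}$, $S_{111}$ that do not vanish a priori, so one cannot first read off (\ref{veryfinalthetasss}) from $J=0$ and then use it to simplify $W$. The correct order is the opposite: one first proves that $W=0$ is equivalent to $S_1=0$ (Lemma \ref{wvanish}), \emph{independently} of $J$, and only then does $J=0$ collapse to the three terms in (\ref{neweq}), which a routine computation identifies with (\ref{veryfinalthetasss}).

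The missing idea is the mechanism behind the equivalence $W=0\Leftrightarrow S_1=0$. Written out, $W=0$ is the differential relation (\ref{w0}), which involves $S_2$, $S_{11}$, $S_{12}$ and the ratio $\rho_{12}/\rho_{11}$, and there is no way to see directly that its vanishing locus coincides with that of $S_1$. The paper gets this by exploiting the standing hypothesis that $\rho$ solves the homogeneous Monge--Amp\`ere equation (\ref{mongeampere}) (a consequence of uniform Levi degeneracy, not of CR-flatness): via Ushakov's change of variables (\ref{changevar}) all such solutions are parametrized by two functions $p,q$ of one variable as in (\ref{solsparam}), the combination $\frac{\rho_{12}}{\rho_{11}}f_1-f_2$ becomes $-\partial f/\partial w$, and (\ref{w0}) reduces to $6(p'')^2(p'''q'-p''q'')/(q'-w\,p'')^5=0$; since $p''\neq0$ by (\ref{snonzero}), this is exactly $S_1=0$. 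Your proposal invokes only the tube symmetry and ``careful algebra,'' and attributes the needed simplification to the Monge equation rather than to the Monge--Amp\`ere equation; without the $p,q$ parametrization the reduction of $W=0$ to $S_1=0$ is an unproven assertion, and the route you outline for obtaining it would not go through.
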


We note that the content of paper \cite{I3} was in showing that the second equation in system (\ref{threeeeqs}) is a consequence of the first one, so (\ref{threeeeqs}) in fact reduces to a single equation. Further, owing to Levi degeneracy, the graphing function of the base of $M$ satisfies the \emph{homogeneous Monge-Amp\`ere equation} (see (\ref{mongeampere})), which plays an important role in our analysis. Hence, the problem of locally determining all CR-flat tubes in the class ${\mathfrak C}_{2,1}$ is described by the following system of partial differential equations:
\begin{equation}
\left\{\begin{array}{l}
\hbox{the Monge equation w.r.t. first variable (\ref{veryfinalthetasss})},\\
\vspace{-0.3cm}\\
\hbox{the Monge-Amp\`ere equation (\ref{mongeampere})},
\end{array}\right.\label{twoeeqs}
\end{equation}
where, in addition to (\ref{rho11}), the function $S$ is required to be everywhere nonvanishing (see (\ref{snonzero})). In fact, as explained in Section \ref{conclus}, equation (\ref{veryfinalthetasss}) in some coordinates becomes the standard \emph{single-variable} Monge equation (\ref{classicalMongeeq}), so (\ref{twoeeqs}) turns into a remarkable system of two classical equations: 
$$
\left\{\begin{array}{l}
\hbox{the Monge equation (\ref{classicalMongeeq})},\\
\vspace{-0.3cm}\\
\hbox{the Monge-Amp\`ere equation (\ref{mongeampere})}.
\end{array}\right.
$$
Curiously, both equations in this system happen to be named after Gaspard Monge.

Next, Section \ref{rigid} concerns \emph{rigid hypersurfaces} in $\CC^3$, i.e., hypersurfaces defined by equations of the form (\ref{rigideq}). After the tube case, this is the next situation up in terms of complexity where the formulas for Pocchiola's invariants $J$ and $W$ still look manageable. Note, however, that the rigid case is much more complicated than the tube one even in the Levi nondegenerate CR-flat setup in $\CC^2$, for which a (somewhat) explicit classification has only recently been found (see \cite{ES} and the earlier work \cite{S}). For Levi degenerate rigid hypersurfaces in $\CC^3$ obtaining a reasonable description is even harder. Nevertheless, upon introducing additional assumptions, we manage to partially classify the germs of CR-flat rigid hypersurfaces in the class ${\mathfrak C}_{2,1}$ up to a certain natural equivalence, which we call \emph{rigid equivalence}. This classification is the second main result of the paper and appears in Theorem \ref{rigidclassif}. It would be interesting to explore whether one can relax the assumptions of the theorem while still being able to produce explicit equations. One consequence of Theorem \ref{rigidclassif} is a rationality result (see Corollary \ref{rational}). It would be rather intriguing to investigate whether rationality holds true regardless of the simplifying assumptions of Theorem \ref{rigidclassif} (see Remark \ref{remrational}). 

{\bf Acknowledgement.} This work was done while the author was visiting the Steklov Mathematical Institute in Moscow.

\section{Preliminaries}\label{prelim}
\setcounter{equation}{0}

Let $M$ be any tube hypersurface in $\CC^3$. For $x\in M$, a \emph{tube neighborhood} of $x$ in $M$ is an open subset $U$ of $M$ that contains $x$ and has the form $M\cap({\mathcal U}+i\RR^3)$, where ${\mathcal U}$ is an open subset of $\RR^3$. It is easy to see that for every point $x\in M$ there exists a tube neighborhood $U$ of $x$ in $M$ and an affine transformation of $\CC^3$ as in (\ref{affequiv}) that maps $x$ to the origin and establishes affine equivalence between $U$ and a tube hypersurface of the form
\begin{equation}
\begin{array}{l}
\Gamma_{\rho}:=\{(z_1,z_2,z_3): z_3+{\bar z}_3=\rho(z_1+{\bar z}_1,z_2+{\bar z}_2)\}=\\
\vspace{-0.1cm}\\
\hspace{4cm}\displaystyle\left\{(z_1,z_2,z_3): \Re z_3=\frac{1}{2}\,\rho(2\Re z_1,2\Re z_2)\right\},
\end{array}\label{basiceq}
\end{equation}
where $\rho(t_1,t_2)$ is a smooth function defined in a neighborhood of 0 in $\RR^2$ with
\begin{equation}
\rho(0)=0,\quad \rho_1(0)=0, \quad \rho_2(0)=0\label{initial}
\end{equation}
(from this moment until the end of Section \ref{conclus} subscripts 1 and 2 indicate partial derivatives with respect to $t_1$ and $t_2$). In what follows, we will be mainly interested in the germ of $\Gamma_{\rho}$ at the origin, so the domain of $\rho$ will be allowed to shrink if necessary. Equations of the form
\begin{equation}
\Re z_3=F(\Re z_1,\Re z_2),\label{tubeeq}
\end{equation}
where $F$ is a smooth real-valued function on a domain in $\RR^2$, are often called \emph{tube equations}. Every tube hypersurface is locally affinely equivalent to the hypersurface defined by a tube equation.  

Let now $M$ be uniformly Levi degenerate of rank 1. Then the Hessian matrix of $\rho$ has rank 1 at every point, hence $\rho$ is a solution of the homogeneous Monge-Amp\`ere equation
\begin{equation}
\rho_{11}\rho_{22}-(\rho_{12})^2=0,\label{mongeampere}
\end{equation}
where, by invoking affine equivalence, one can additionally assume
\begin{equation}
\hbox{$\rho_{11}>0$ everywhere.}\label{rho11}
\end{equation}

Next, in \cite[Section 3]{I2} we showed that for $\rho$ satisfying (\ref{mongeampere}), (\ref{rho11}), the hypersurface $\Gamma_{\rho}$ is 2-nondegenerate if and only if the function
\begin{equation}
S:=\left(\frac{\rho_{12}}{\rho_{11}}\right)_{1}\label{functions} 
\end{equation}
vanishes nowhere (cf. \cite{MP,Poc}). Thus, assuming that $M$ is 2-nondegenerate, we have
\begin{equation}
\hbox{$S\ne 0$ everywhere.}\label{snonzero}
\end{equation}

Further, consider the equation
\begin{equation}
9\rho^{{\rm(V)}}(\rho_{11})^{2}-45\rho^{{\rm(IV)}}\rho_{111}\rho_{11}+40(\rho_{111})^{3}=0, 
\label{veryfinalthetasss}
\end{equation}
where $\rho^{{\rm(IV)}}:=\partial^{\,4}\rho/\partial\,t_1^4$, $\rho^{{\rm(V)}}:=\partial^{\,5}\rho/\partial\,t_1^5$. This is the Monge equation with respect to the first variable that appears in system (\ref{twoeeqs}). In the next section we will see that (\ref{veryfinalthetasss}) follows from system (\ref{condsjw}). Thus, the detailed form of system (\ref{twoeeqs}) is
$$
\left\{\begin{array}{l}
9\rho^{{\rm(V)}}(\rho_{11})^{2}-45\rho^{{\rm(IV)}}\rho_{111}\rho_{11}+40(\rho_{111})^{3}=0,\\
\vspace{-0.3cm}\\
\rho_{11}\rho_{22}-(\rho_{12})^2=0,
\end{array}\right.\label{twoeeqs1}
$$
where conditions (\ref{rho11}) and (\ref{snonzero}) are satisfied.

We now turn to Pocchiola's invariants $J$ and $W$. Explicit  formulas for them in terms of a graphing function of a hypersurface in $\CC^3$ in the class ${\mathfrak C}_{2,1}$ are given in \cite{MP,Poc}. They are quite complicated in general, but for the hypersurface $\Gamma_{\rho}$ defined by (\ref{basiceq}) these formulas are easily seen to simplify as
\begin{equation}
\begin{array}{l}
\displaystyle J=\frac{5(S_1)^2}{18 S^2}\frac{\rho_{111}}{\rho_{11}}+\frac{1}{3}\frac{\rho_{111}}{\rho_{11}}\left(\frac{\rho_{111}}{\rho_{11}}\right)_1-\frac{S_1}{9 S}\frac{(\rho_{111})^2}{(\rho_{11})^2}+\frac{20(S_1)^3}{27 S^3}-\frac{5 S_1 S_{11}}{6 S^2}+\\
\vspace{-0.3cm}\\
\hspace{0.7cm}\displaystyle\frac{S_1}{6 S}\left(\frac{\rho_{111}}{\rho_{11}}\right)_1-\frac{S_{11}}{6 S}\frac{\rho_{111}}{\rho_{11}}-\frac{2}{27}\frac{(\rho_{111})^3}{(\rho_{11})^3}-\frac{1}{6}\left(\frac{\rho_{111}}{\rho_{11}}\right)_{11}+\frac{S_{111}}{S_1},\\
\vspace{-0.1cm}\\
\displaystyle W=\frac{4S_1}{3S}+\frac{S_1}{3S^3}\left(\frac{\rho_{12}}{\rho_{11}}S_1-S_2\right)-\frac{1}{3S^2}\left(\frac{\rho_{12}}{\rho_{11}}S_{11}-S_{12}\right),
\end{array}\label{expljw}
\end{equation}
where all functions of the variables $t_1$, $t_2$ are calculated for
\begin{equation}
\begin{array}{l}
t_1=z_1+{\bar z}_1,\\
\vspace{-0.3cm}\\
t_2=z_2+{\bar z}_2.
\end{array}\label{substitution} 
\end{equation}

We have now collected all the facts required for obtaining equation (\ref{veryfinalthetasss}) from system (\ref{condsjw}) and, moreover, for giving a short proof of Theorem \ref{flatness}.

\section{Proof of Theorem \ref{flatness}}\label{main}
\setcounter{equation}{0}

In our proof we extensively use the fact that the function $\rho$ satisfies the homogeneous Monge-Amp\`ere equation (\ref{mongeampere}), so we start by recalling classical facts concerning its solutions. For details the reader is referred to paper \cite{U}. 

Let us make the following change of variables near the origin:
\begin{equation}
\begin{array}{l}
v=\rho_1(t_1,t_2),\\
\vspace{-0.3cm}\\
w=t_2
\end{array}\label{changevar}
\end{equation}
and set
\begin{equation}
\begin{array}{l}
p(v,w):=\rho_2(t_1(v,w),w),\\
\vspace{-0.3cm}\\
q(v):=t_1(v,0).
\end{array}\label{condsfg}
\end{equation}
Equation (\ref{mongeampere}) immediately implies that $p$ is independent of $w$, so we write $p$ as a function of the variable $v$ alone. Furthermore, we have
\begin{equation}
q'(v)=\frac{1}{\rho_{11}(t_1(v,0),0)}.\label{gprime}
\end{equation}
Clearly, (\ref{initial}), (\ref{rho11}), (\ref{changevar}), (\ref{condsfg}), (\ref{gprime}) yield
\begin{equation}
p(0)=0,\quad q(0)=0,\quad \hbox{$q'>0$ everywhere.}\label{initialconds}
\end{equation}

In terms of $p$ and $q$, the inverse of (\ref{changevar}) is written as
\begin{equation}
\begin{array}{l}
t_1=q(v)-w\,p'(v),\\
\vspace{-0.3cm}\\
t_2=w,
\end{array}\label{inverttted}
\end{equation}
and the solution $\rho$ in the variables $v,w$ is given by
\begin{equation}
\rho(t_1(v,w),w)=vq(v)-\int_{0}^vq(\tau)d\tau+w(p(v)-vp'(v)).\label{solsparam}
\end{equation}
In particular, we see that the general smooth solution of the homogeneous Monge-Amp\`ere equation (\ref{mongeampere}) satisfying conditions (\ref{initial}), (\ref{rho11}) is parametrized by a pair of arbitrary smooth functions satisfying (\ref{initialconds}).

By \cite{MP, Poc}, the condition that $\Gamma_{\rho}$ is CR-flat is equivalent to system (\ref{condsjw}), where $J$ and $W$ are given by formulas (\ref{expljw}). Notice that, when $J$ and $W$ are equated to zero, one no longer needs to assume that substitution (\ref{substitution}) takes place, thus the right-hand sides of (\ref{expljw}) in system (\ref{condsjw}) are regarded as functions on a neighborhood of the origin in $\RR^2$.

First, we analyze the second equation in (\ref{condsjw}):

\begin{lemma}\label{wvanish}
The condition $W=0$ is equivalent to the equation $S_1=0$.
\end{lemma}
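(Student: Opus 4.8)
The plan is to exploit the homogeneous Monge-Amp\`ere equation (\ref{mongeampere}) to collapse the rational expression for $W$ in (\ref{expljw}) into a constant multiple of $S_1$, after which the statement is immediate from the nonvanishing of $S$ in (\ref{snonzero}). To organize the bookkeeping, I would write $m:=\rho_{12}/\rho_{11}$, so that by (\ref{functions}) one simply has $S=m_1$, and note that every term of $W$ is built from $m$, $S$, and their first and second $t_1$-, $t_2$-derivatives.

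The heart of the argument is a single differential identity, namely
$$m_2=m\,m_1=mS.$$
There are two routes to it. The quickest is to differentiate (\ref{mongeampere}) with respect to $t_1$ and then substitute $\rho_{22}=(\rho_{12})^2/\rho_{11}$; this yields $\rho_{11}^2\rho_{122}=2\rho_{11}\rho_{12}\rho_{112}-(\rho_{12})^2\rho_{111}$, which is exactly the numerator relation equivalent to $m_2=m\,m_1$. Alternatively, the explicit parametrization (\ref{inverttted})--(\ref{solsparam}) gives $\rho_{11}=1/(q'(v)-w\,p''(v))$ and $\rho_{12}=p'(v)/(q'(v)-w\,p''(v))$, whence $m=p'(v)$ with $v=\rho_1$; differentiating then produces $m_1=p''(v)\rho_{11}$ and $m_2=p''(v)\rho_{12}=m\,m_1$ at once. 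I expect the parametrization route to be the cleaner one, as it also explains conceptually why $m$ depends on $(t_1,t_2)$ only through $v=\rho_1$.

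With $m_2=m\,m_1$ established, the remaining ingredients of $W$ follow by further differentiation in $t_1$. Differentiating once and using $m_1=S$ gives $S_2=m_{12}=S^2+mS_1$, hence $mS_1-S_2=-S^2$; differentiating once more gives $S_{12}=3SS_1+mS_{11}$, hence $mS_{11}-S_{12}=-3SS_1$. Substituting these two identities into the expression for $W$ in (\ref{expljw}) makes the fractions telescope, and $W$ reduces to $W=2S_1/S$. Since $S\ne 0$ everywhere by (\ref{snonzero}), this yields $W=0\iff S_1=0$, which is the assertion of the lemma.

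The only genuine obstacle is locating and verifying the identity $m_2=m\,m_1$; once it is in hand, everything else is a routine application of the chain rule and the product rule, and no information about $\rho$ beyond the Monge-Amp\`ere equation enters. In particular, the conditions (\ref{rho11}) and (\ref{snonzero}) are used only at the very end, to guarantee that $m$ is well defined and that division by $S$ is legitimate.
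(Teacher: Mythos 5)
Your proposal is correct, and every identity in it checks out: differentiating the Monge--Amp\`ere relation $\rho_{11}\rho_{22}=(\rho_{12})^2$ in $t_1$ does give $\rho_{11}^2\rho_{122}=2\rho_{11}\rho_{12}\rho_{112}-(\rho_{12})^2\rho_{111}$, which is $m_2=m\,m_1$ for $m=\rho_{12}/\rho_{11}$; two further $t_1$-differentiations give $mS_1-S_2=-S^2$ and $mS_{11}-S_{12}=-3SS_1$, and substitution into (\ref{expljw}) yields $W=2S_1/S$, whence the lemma follows from (\ref{snonzero}). The paper reaches the same conclusion by a slightly different route: it passes entirely to the coordinates $(v,w)$ of (\ref{changevar}), uses the identity $\frac{\rho_{12}}{\rho_{11}}f_1-f_2=-\partial f/\partial w$ together with the explicit formulas (\ref{ids444}) for $S$ and $S_1$ in those coordinates, and reduces $W=0$ to $6(p'')^2(p'''q'-p''q'')/(q'-w\,p'')^5=0$, i.e.\ to $S_1=0$. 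The two computations produce identical intermediate quantities (indeed $-\partial S/\partial w=-S^2$ and $-\partial S_1/\partial w=-3SS_1$ in the paper's coordinates), so the mathematical content is the same; what your version buys is that it stays in the original variables, needs only the single pointwise consequence $m_2=m\,m_1$ of the Monge--Amp\`ere equation rather than the full Ushakov parametrization (\ref{inverttted})--(\ref{solsparam}), and records the clean closed form $W=2S_1/S$, which the paper leaves implicit. Your second suggested derivation of $m_2=m\,m_1$ via $m=p'(v)$ is in effect what the paper does, so either route is available.
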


\begin{proof} From the second equation in (\ref{expljw}) it follows that the condition $W=0$ is equivalent to
\begin{equation}
4S^2S_1+S_1\left(\frac{\rho_{12}}{\rho_{11}}S_1-S_2\right)-S\left(\frac{\rho_{12}}{\rho_{11}}S_{11}-S_{12}\right)=0.\label{w0}
\end{equation}
We will now rewrite (\ref{w0}) in the variables $v$, $w$ introduced in (\ref{changevar}).

First of all, from (\ref{changevar}), (\ref{inverttted}) we compute
\begin{equation}
\begin{array}{l}
\displaystyle\rho_{11}(t_1(v,w),w)=\displaystyle\frac{1}{q'-w\,p''},\\
\vspace{-0.3cm}\\
\displaystyle\rho_{12}(t_1(v,w),w)=\displaystyle\frac{p'}{q'-w\,p''}.
\end{array}\label{secondpartials}
\end{equation}
Therefore, using (\ref{inverttted}), for any function $f(t_1,t_2)$ we see
\begin{equation}
\frac{\rho_{12}(t_1(v,w),w)}{\rho_{11}(t_1(v,w),w)}f_1(t_1(v,w),w)-f_2(t_1(v,w),w)=-\frac{\partial f(t_1(v,w),w)}{\partial w}.\label{convidentity}
\end{equation}
Next, from formulas (\ref{functions}), (\ref{changevar}), (\ref{secondpartials}) we obtain
\begin{equation}
\begin{array}{l}
\displaystyle S(t_1(v,w),w)=\frac{p''}{q'-w\,p''},\\
\vspace{-0.1cm}\\
\displaystyle S_1(t_1(v,w),w)=\frac{p'''q'-p''q''}{(q'-w\,p'')^3}.\\
\end{array}\label{ids444}
\end{equation}

It follows from (\ref{convidentity}), (\ref{ids444}) that (\ref{w0}) in the variables $v$, $w$ becomes
$$
\frac{6(p'')^2(p'''q'-p''q'')}{(q'-w\,p'')^5}=0.
$$ 
By (\ref{snonzero}) and the first equation in (\ref{ids444}) we see that $p''$ is nowhere zero, which shows that (\ref{w0}) is equivalent to the identity
\begin{equation}
p'''q'-p''q''= 0,\label{s10}
\end{equation}
that is, by the second equation in (\ref{ids444}), to the identity $S_1=0$. \end{proof}

We will now finalize the proof of Theorem \ref{flatness}. By the first formula in (\ref{expljw}) and Lemma \ref{wvanish}, we see that the first identity in system (\ref{condsjw}) becomes
\begin{equation} 
\frac{1}{3}\frac{\rho_{111}}{\rho_{11}}\left(\frac{\rho_{111}}{\rho_{11}}\right)_1-\frac{2}{27}\frac{(\rho_{111})^3}{(\rho_{11})^3}-\frac{1}{6}\left(\frac{\rho_{111}}{\rho_{11}}\right)_{11}=0,\label{neweq}
\end{equation}
and a straightforward calculation yields that (\ref{neweq}) is exactly equation (\ref{veryfinalthetasss}). \qed

\section{Equation (\ref{veryfinalthetasss}) and the classical Monge equation}\label{conclus}
\setcounter{equation}{0}

In this section, we briefly discuss how (\ref{veryfinalthetasss}), which is the Monge equation with respect to the first variable, is related to the classical single-variable Monge equation. 

Let us rewrite (\ref{veryfinalthetasss}) in coordinates $v$, $w$ introduced in (\ref{changevar}). From (\ref{changevar}) and (\ref{secondpartials}) one computes 
\begin{equation}
\hspace{0.8cm}\makebox[250pt]{$\begin{array}{l}
\displaystyle \rho_{111}(t_1(v,w),w)=-\frac{q''-w\,p'''}{(q'-w\,p'')^3},\\
\vspace{-0.1cm}\\
\displaystyle \rho^{{\rm(IV)}}(t_1(v,w),w)=-\frac{1}{(q'-w\,p'')^5}\Bigl[(q'''-w\,p^{{\rm(IV)}})(q'-w\,p'')-\\
\vspace{-0.6cm}\\
\displaystyle\hspace{9cm}3(q''-w\,p''')^2\Bigr],\\
\vspace{-0.1cm}\\
\displaystyle \rho^{{\rm(V)}}(t_1(v,w),w)=-\frac{1}{(q'-w\,p'')^7}\Bigl[\Bigl((q^{{\rm(IV)}}-w\,p^{{\rm(V)}})(q'-w\,p'')-\\
\vspace{-0.4cm}\\
\displaystyle\hspace{1cm}5(q''-w\,p''')(q'''-w\,p^{{\rm(IV)}})\Bigr)(q'-w\,p'')-\\
\vspace{-0.4cm}\\
\displaystyle\hspace{1cm}5\Bigl((q'''-w\,p^{{\rm(IV)}})(q'-w\,p'')-3(q''-w\,p''')^2\Bigr)(q''-w\,p''')\Bigr].
\end{array}$}\label{rho4and5}
\end{equation}
Plugging expressions from (\ref{secondpartials}), (\ref{rho4and5}) into (\ref{veryfinalthetasss}) and collecting coefficients at $w^k$ for $k=0,1,2,3$ in the resulting formula, we see that (\ref{veryfinalthetasss}) is equivalent to the following system of four ordinary differential equations:
\begin{equation}
\left\{
\begin{array}{l}
9p^{{\rm(V)}}(p'')^2-45p^{{\rm(IV)}}p'''p''+40(p''')^3=0,\\
\vspace{-0.1cm}\\
6p^{{\rm(V)}}p''q'+3(p'')^2q^{{\rm(IV)}}-15(p^{{\rm(IV)}}p'''q'+p^{{\rm(IV)}}p''q''+p'''p''q''')+\\
\vspace{-0.3cm}\\
\hspace{8cm}40(p''')^2q''=0,\\
\vspace{-0.3cm}\\
3p^{{\rm(V)}}(q')^2+6p''q^{{\rm(IV)}}q'-15(p^{{\rm(IV)}}q''q'+p'''q'''q'+p''q'''q'')+\\
\vspace{-0.3cm}\\
\hspace{8cm}40p'''(q'')^2=0,\\
\vspace{-0.3cm}\\
9q^{{\rm(IV)}}(q')^2-45q'''q''q'+40(q'')^3=0.\\
\end{array}
\right.\label{final1}
\end{equation}

Notice that the first entry in system (\ref{final1}) is the classical Monge equation. Also observe that all the equations in (\ref{final1}) follow from the first one if $S_1$ vanishes, i.e., if condition (\ref{s10}) is satisfied. Indeed, dividing  (\ref{s10}) by $(p'')^2$, we see that the vanishing of $S_1$ is equivalent to 
\begin{equation}
q'/p''=\hbox{const},\label{firstcur}
\end{equation}
which guarantees that each equation in (\ref{final1}) is a consequence of the first one.

In fact, the four equations in (\ref{final1}) are known to imply relation (\ref{firstcur}). This surprising result was established in \cite{I3}, thus we see that system (\ref{final1}) reduces to its first entry. In other words, (\ref{veryfinalthetasss}) in coordinates $v$, $w$ becomes the classical single-variable Monge equation
\begin{equation}
9p^{{\rm(V)}}(p'')^2-45p^{{\rm(IV)}}p'''p''+40(p''')^3=0.\label{classicalMongeeq}
\end{equation}

In the next section we will consider the more complicated case of rigid hypersurfaces and observe that it leads to a complex analogue of equation (\ref{veryfinalthetasss}).

\section{CR-curvature of rigid hypersurfaces}\label{rigid}
\setcounter{equation}{0}

Consider $\CC^3$ with standard coordinates $(z_1,z_2,z_3)$. A \emph{rigid equation} in $\CC^3$ is an equation of the form
\begin{equation}
\Re z_3=F(z_1,\overline{z}_1,z_2,\overline{z}_2),\label{rigideq}
\end{equation}
where $F$ is a smooth real-valued function defined on a domain in $\CC^2$. Such equations generalize tube ones as defined in (\ref{tubeeq}). We call any hypersurface given by a rigid equation a \emph{rigid hypersurface}. Rigid hypersurfaces have rigid CR-structures in the sense of \cite{BRT}. We only consider rigid hypersurfaces passing through the origin and will be mainly interested in their germs at 0. Thus, we assume that $F$ in (\ref{rigideq}) is defined near the origin and $F(0)=0$, with the domain of $F$ being allowed to shrink if necessary.

 Let now $M$ be a rigid hypersurface  that is uniformly Levi degenerate of rank 1. Then the complex Hessian matrix of $F$ has rank 1 at every point, hence $F$ is a solution of the \emph{complex homogeneous Monge-Amp\`ere equation}
\begin{equation}
F_{1\bar1}F_{2\bar 2}-|F_{1\bar 2}|^2=0\label{cmplxmongeampere}
\end{equation}
(here and below subscripts $1$, $\bar 1$, $2$, $\bar 2$ indicate partial derivatives with respect to $z_1$, $\bar z_1$, $z_2$, $\bar z_2$, respectively). Clearly, we have either $F_{1\bar1}(0)\ne 0$ or $F_{2\bar2}(0)\ne 0$, so by interchanging the variables $z_1, z_2$ and multiplying $z_3$ by -1 if necessary, we may additionally assume
\begin{equation}
\hbox{$F_{1\bar 1}>0$ everywhere.}\label{f11}
\end{equation}
Note that the transformation that brings equation (\ref{rigideq}) to an equation of the same form satisfying condition (\ref{f11}) establishes equivalence between the two hypersurface germs in the sense of Definition \ref{equivalencerel} given below.

Set
$$
S:=\left(\frac{F_{1\bar 2}}{F_{1\bar 1}}\right)_1\label{newdefs}
$$
(cf.~(\ref{functions})). The condition of 2-nondegeneracy is then expressed as the nonvanishing of $S$ (see \cite{MP,Poc} and cf.~\cite{I2}). Thus, assuming that $M$ is 2-nondegenerate, we have
\begin{equation}
\hbox{$S\ne 0$ everywhere.}\label{cmplxsnonzero}
\end{equation}

We now turn to Pocchiola's invariants $J$ and $W$ in the rigid case. A somewhat lengthy but straightforward computation yields that the complicated formulas of \cite{MP,Poc} simplify as shown in the following proposition, which we state without proof:

\begin{proposition}\label{complexexplinvprop}
Let $M$ be a rigid hypersurface in the class ${\mathfrak C}_{2,1}$, where the function $F$ satisfies {\rm (\ref{f11})}. Then we have 
\begin{equation}
\begin{array}{l}
\displaystyle J=\frac{5(S_1)^2}{18 S^2}\frac{F_{11\bar 1}}{F_{1\bar 1}}+\frac{1}{3}\frac{F_{11\bar 1}}{F_{1\bar 1}}\left(\frac{F_{11\bar 1}}{F_{1\bar 1}}\right)_1-\frac{S_1}{9 S}\frac{(F_{11\bar 1})^2}{(F_{1\bar 1})^2}+\frac{20(S_1)^3}{27 S^3}-\frac{5 S_1 S_{11}}{6 S^2}+\\
\vspace{-0.3cm}\\
\hspace{0.7cm}\displaystyle\frac{S_1}{6 S}\left(\frac{F_{11\bar 1}}{F_{1\bar 1}}\right)_1-\frac{S_{11}}{6 S}\frac{F_{11\bar 1}}{F_{1\bar 1}}-\frac{2}{27}\frac{(F_{11\bar 1})^3}{(F_{1\bar 1})^3}-\frac{1}{6}\left(\frac{F_{11\bar1}}{F_{1\bar 1}}\right)_{11}+\frac{S_{111}}{S_1},\\
\vspace{-0.1cm}\\
\displaystyle W=\frac{2\bar S_1}{3\bar S}+\frac{2S_1}{3S}+\frac{\bar S_{\bar 1}}{3\bar S^3}\left(\frac{F_{2\bar 1}}{F_{1\bar 1}}\bar S_1-\bar S_2\right)-\frac{1}{3\bar S^2}\left(\frac{F_{2\bar 1}}{F_{1\bar 1}}\bar S_{1\bar 1}-\bar S_{2\bar 1}\right).
\end{array}\label{cmplxexpljw}
\end{equation}
\end{proposition}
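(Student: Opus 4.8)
The plan is to obtain (\ref{cmplxexpljw}) by specializing the general expressions for $J$ and $W$ recorded in \cite{MP,Poc} to a rigid graphing function and then simplifying the result with the complex homogeneous Monge-Amp\`ere equation (\ref{cmplxmongeampere}). The computation runs in close parallel with the one behind the tube formulas (\ref{expljw}), and the latter supply both a template and a decisive check: setting $F=\tfrac12\rho(z_1+\bar z_1,z_2+\bar z_2)$ makes $S$ real, forces $\bar S=S$, $\bar S_1=\bar S_{\bar1}=S_1$, $F_{2\bar1}/F_{1\bar1}=\rho_{12}/\rho_{11}$ and $F_{11\bar1}/F_{1\bar1}=\rho_{111}/\rho_{11}$, so that (\ref{cmplxexpljw}) must collapse to (\ref{expljw}). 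I would establish this collapse at the outset, since it already pins down every numerical coefficient and the conjugation pattern.

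The concrete starting point is a CR frame. For $M=\{z_3+\bar z_3=2F\}$ the bundle $T^{1,0}M$ is spanned by
$$
L_1=\partial_{z_1}+2F_1\,\partial_{z_3},\qquad L_2=\partial_{z_2}+2F_2\,\partial_{z_3},
$$
and, since the complex Hessian $(F_{i\bar\jmath})$ has rank $1$ with $F_{1\bar1}>0$ by (\ref{f11}), the Levi kernel is generated by
$$
K:=L_2-\frac{F_{2\bar1}}{F_{1\bar1}}\,L_1 ,
$$
because $\mathcal L(K,\bar L_1)=0$ is immediate and $\mathcal L(K,\bar L_2)=0$ is precisely (\ref{cmplxmongeampere}). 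The Levi-nondegenerate direction is thus $L_1$, the kernel direction is $K$, and the two scalars organizing everything are the kernel coefficient $F_{2\bar1}/F_{1\bar1}$ and the $2$-nondegeneracy function $S$ of (\ref{newdefs}), nonvanishing by (\ref{cmplxsnonzero}). I would prolong this frame to the adapted coframe, run the reduction of \cite{MP,Poc}, and isolate the relative invariants $J$ and $W$.

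Three mechanisms turn the general formulas into (\ref{cmplxexpljw}). First, rigidity: no coefficient depends on $\Im z_3$, so every term carrying a derivative in the transverse direction vanishes, which is what leaves the surviving expressions polynomial in the jets of $F$. Second, the Monge-Amp\`ere relation (\ref{cmplxmongeampere}) and its $z_1$-, $\bar z_1$-, $z_2$-derivatives repeatedly trade $F_{2\bar2}$ and higher mixed derivatives for $F_{1\bar1}$, $F_{1\bar2}$, $F_{2\bar1}$, $F_{11\bar1}$ and their $z_1$-derivatives; this is where $S$, $S_1$, $S_{11}$, $S_{111}$ and $F_{11\bar1}/F_{1\bar1}$ emerge. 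Third, conjugation bookkeeping: unlike the tube case, $S$ is genuinely complex, so $\partial_{z_1}$ and $\partial_{\bar z_1}$ must be kept strictly apart. The invariant $J$ then turns out to depend only on the holomorphic $z_1$-jet of $F_{1\bar1}$, $F_{11\bar1}$ and $S$, explaining its formal identity with the tube expression, whereas $W$ feeds $\bar S$ through the kernel: the blocks $\tfrac{F_{2\bar1}}{F_{1\bar1}}(\,\cdot\,)_1-(\,\cdot\,)_2$ are exactly $-K$ acting on $\bar S$ and $\bar S_{\bar1}$, matching the operator $-\partial_w$ of identity (\ref{convidentity}) (the change of variables (\ref{changevar}) straightens $K$ to $\partial_w$).

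The main obstacle is organizational rather than conceptual: the reduction of \cite{MP,Poc} generates very long intermediate expressions, and the delicate part is carrying out the Leibniz expansions of quantities such as $K(F_{2\bar1}/F_{1\bar1})$ and $L_1^2(\log F_{1\bar1})$ alongside their conjugates without dropping or mislabeling a bar. I would control this by performing all substitutions symbolically, reducing every second- and third-order derivative of $F$ modulo (\ref{cmplxmongeampere}) to the canonical generators $F_{1\bar1}$, $F_{1\bar2}$, $F_{2\bar1}$, $S$ and their $z_1$-derivatives before collecting terms, and verifying at each stage that the tube specialization described at the outset reproduces the matching term of (\ref{expljw}). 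As every resulting expression is rational in these generators with $F_{1\bar1},S\neq0$, the final identification with (\ref{cmplxexpljw}) is a finite rational identity that can be certified term by term.
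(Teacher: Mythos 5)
The paper states this proposition without proof, remarking only that it follows from the explicit general formulas of \cite{MP,Poc} by a ``somewhat lengthy but straightforward computation''; your proposal is precisely that computation, organized around the Levi-kernel frame and reduction modulo the complex Monge--Amp\`ere equation, so it takes essentially the same route as the paper intends. One small caveat: the tube specialization cannot by itself ``pin down the conjugation pattern,'' since every bar disappears once $S$ becomes real (e.g.\ it cannot distinguish $\tfrac{2\bar S_1}{3\bar S}+\tfrac{2S_1}{3S}$ from $\tfrac{4S_1}{3S}$), so that check is necessary but not sufficient and the full complex bookkeeping must still be carried out, as you indeed propose.
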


Just as in the tube case, we are interested in classifying the germs of CR-flat rigid hypersurfaces in ${\mathfrak C}_{2,1}$. Certainly, in order to speak about a classification, one must decide what germs are to be called equivalent. In contrast to the tubular case, where we utilized affine equivalence, there is no widely accepted notion of equivalence in the rigid situation, and we introduce one as follows: 

\begin{definition}\label{equivalencerel}
Two germs of rigid hypersurfaces at 0 are called \emph{rigidly equivalent} if there exists a map of the form
\begin{equation}
(z_1,z_2,z_3)\mapsto (f(z_1,z_2),g(z_1,z_2),a z_3+h(z_1,z_2)),\,\,a\in \RR^*,\label{rigideqmaps}
\end{equation}
nondegenerate at the origin, where $f,g,h$ are functions holomorphic near 0 with $f(0)=g(0)=h(0)=0$, that maps one hypersurface germ into the other. 
\end{definition}

One should be warned that two affinely equivalent tube hypersurface germs defined by tube equations are not necessarily rigidly equivalent. 

Finding the germs of the graphs of all solutions of system (\ref{condsjw}) in the rigid case up to rigid equivalence is apparently very hard. Below, we will only make some initial steps towards this goal. Specifically, we will discuss solutions having the property
\begin{equation}
S_1=0,\quad S_{\bar 1}=0.\label{s1111}
\end{equation}
Our motivation for introducing conditions (\ref{s1111}) comes from the tube case, where these conditions are equivalent to the equation $W=0$ (see Lemma \ref{wvanish}). At this stage, we do not know whether the same holds in the rigid case as well, but it is clear from (\ref{cmplxexpljw}) that (\ref{s1111}) implies $W=0$. 

Furthermore, conditions (\ref{s1111}) lead to the following simplified expression for $J$:
\begin{equation}
\displaystyle J=\frac{1}{3}\frac{F_{11\bar 1}}{F_{1\bar 1}}\left(\frac{F_{11\bar 1}}{F_{1\bar 1}}\right)_1-\frac{2}{27}\frac{(F_{11\bar 1})^3}{(F_{1\bar 1})^3}-\frac{1}{6}\left(\frac{F_{11\bar1}}{F_{1\bar 1}}\right)_{11}.\label{reducedj}
\end{equation}
Formula (\ref{reducedj}) yields that under assumption (\ref{s1111}) the equation $J=0$ is equivalent to
\begin{equation}
9 F_{1111\bar 1}(F_{1\bar 1})^2-45 F_{111\bar 1}F_{11\bar 1} F_{1\bar 1}+40 (F_{11\bar 1})^3=0,\label{mplxmongeeq}
\end{equation}
which looks remarkably similar to the Monge equation with respect to the first variable (\ref{veryfinalthetasss}). We call (\ref{mplxmongeeq}) the \emph{complex Monge equation} with respect to $z_1$.

\begin{remark}\label{tubevsrigid}
Recall that our proof of the equivalence of conditions (\ref{s1111}) and the equation $W=0$ in the tube case given in Lemma \ref{wvanish} relied on representation (\ref{solsparam}) of the solutions of the real homogeneous Monge-Amp\`ere equation (\ref{mongeampere}), which was based on change of variables (\ref{changevar}). This representation was also the key point of our proof in \cite{I3} of the fact that for tube hypersurfaces equation (\ref{mplxmongeeq}) implies conditions (\ref{s1111}). As there is no analogue of (\ref{changevar}) for the complex homogeneous Monge-Amp\`ere equation (\ref{cmplxmongeampere}), the proofs that worked in the tube case do not immediately generalize to the rigid one. 
\end{remark}

Thus, we arrive at a natural class of CR-flat rigid hypersurfaces in ${\mathfrak C}_{2,1}$ described by the system of partial differential equations
$$
\left\{\begin{array}{l}
\hbox{the complex Monge equation w.r.t. $z_1$ (\ref{mplxmongeeq})},\\
\vspace{-0.3cm}\\
\hbox{the complex Monge-Amp\`ere equation (\ref{cmplxmongeampere})},\\
\vspace{-0.3cm}\\
\hbox{equations (\ref{s1111})},
\end{array}\right.
$$
where conditions (\ref{f11}) and (\ref{cmplxsnonzero}) are satisfied. This system may be viewed as a complex analogue of (\ref{twoeeqs}).

We will now prove that (\ref{mplxmongeeq}) can be integrated three times with respect to $z_1$. An analogous fact holds for the Monge equations (\ref{veryfinalthetasss}) and (\ref{classicalMongeeq}) (see, e.g., \cite{I2, I3}). 

\begin{proposition}
A function $F$ satisfying {\rm (\ref{f11})} is a solution of {\rm (\ref{mplxmongeeq})} if and only if
\begin{equation}
\begin{array}{l}
\displaystyle\frac{1}{(F_{1\bar 1})^{\frac{2}{3}}}=f(z_2,\bar z_2)|z_1|^4+g(z_2,\bar z_2)z_1^2\bar z_1+\overline{g(z_2,\bar z_2)}z_1\bar z_1^2+h(z_2,\bar z_2)|z_1|^2+\\
\vspace{-0.3cm}\\
\hspace{1.5cm}\displaystyle p(z_2,\bar z_2)z_1^2+\overline{p(z_2,\bar z_2)}\bar z_1^2+q(z_2,\bar z_2)z_1+\overline{q(z_2,\bar z_2)}\bar z_1+v(z_2,\bar z_2),
\end{array}\label{intthreetimes}
\end{equation}
where $f,g,h,p,q,v$ are smooth functions, with $f, h, v$ being real-valued.
\end{proposition}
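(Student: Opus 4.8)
The plan is to reduce equation (\ref{mplxmongeeq}) to a single third-order condition on an auxiliary function. The crucial structural observation is that, because $F$ is real-valued, the quantity $u:=F_{1\bar 1}$ is a smooth function that is real and, by (\ref{f11}), positive; moreover (\ref{mplxmongeeq}) involves only the holomorphic $z_1$-derivatives of $u$. Indeed, writing $u_k:=\partial^k u/\partial z_1^k$, we have $F_{11\bar 1}=u_1$, $F_{111\bar 1}=u_{11}$ and $F_{1111\bar 1}=u_{111}$, so (\ref{mplxmongeeq}) becomes $9u_{111}u^2-45u_{11}u_1u+40(u_1)^3=0$, with $\bar z_1,z_2,\bar z_2$ acting merely as parameters. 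This is formally identical to the classical single-variable Monge equation (\ref{classicalMongeeq}), and the three-fold integration we want is the complex counterpart of the classical one.

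First I would introduce $\phi:=(F_{1\bar 1})^{-2/3}=u^{-2/3}$, which is smooth and well defined by (\ref{f11}), and differentiate it three times in $z_1$. A routine computation using $u>0$ yields the identity $\partial^3\phi/\partial z_1^3=-\tfrac{2}{27}\,u^{-11/3}\bigl(9u_{111}u^2-45u_{11}u_1u+40(u_1)^3\bigr)$. Since the scalar factor never vanishes, this shows at once that $F$ solves (\ref{mplxmongeeq}) if and only if $\partial^3\phi/\partial z_1^3=0$.

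Next I would turn this one-sided holomorphic condition into a two-sided one using reality. As $u=F_{1\bar 1}$ is real, so is $\phi$; conjugating $\partial^3\phi/\partial z_1^3=0$ and using $\overline{\phi}=\phi$ gives $\partial^3\phi/\partial\bar z_1^3=0$ as well. Interpreting both conditions in the Wirtinger calculus with $z_2,\bar z_2$ frozen, the vanishing of $\partial^3\phi/\partial z_1^3$ forces $\phi$ to be a polynomial of degree $\le 2$ in $z_1$ whose coefficients are annihilated by $\partial_{z_1}$, hence depend only on $\bar z_1,z_2,\bar z_2$; the vanishing of $\partial^3\phi/\partial\bar z_1^3$ then makes those coefficients polynomials of degree $\le 2$ in $\bar z_1$. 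Combining, $\phi=\sum_{0\le j,k\le 2}c_{jk}(z_2,\bar z_2)\,z_1^j\bar z_1^k$. Imposing $\overline{\phi}=\phi$ is equivalent to $\overline{c_{jk}}=c_{kj}$, so the diagonal coefficients $c_{00},c_{11},c_{22}$ are real and the off-diagonal ones pair up into conjugates; renaming $f:=c_{22}$, $g:=c_{21}$, $h:=c_{11}$, $p:=c_{20}$, $q:=c_{10}$, $v:=c_{00}$ and using $z_1^2\bar z_1^2=|z_1|^4$, $z_1\bar z_1=|z_1|^2$ recovers exactly the expansion (\ref{intthreetimes}), with $f,h,v$ real. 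The converse is immediate: any $\phi$ of the form (\ref{intthreetimes}) has degree $\le 2$ in $z_1$, so $\partial^3\phi/\partial z_1^3=0$, and the displayed identity then forces (\ref{mplxmongeeq}).

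I expect the only genuinely delicate point to be the Wirtinger-integration step: one must argue carefully that $\partial^3\phi/\partial z_1^3=0$ really produces a degree-$2$ polynomial in $z_1$ with antiholomorphic coefficients, and that feeding in the conjugate condition pins down the joint bidegree in $z_1$ and $\bar z_1$. By contrast, the explicit algebraic identity for $\partial^3\phi/\partial z_1^3$ and the reality bookkeeping that fixes the symmetric coefficient pattern are purely mechanical.
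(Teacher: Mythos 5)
Your proposal is correct and follows essentially the same route as the paper: the identity $\partial^3\bigl((F_{1\bar 1})^{-2/3}\bigr)/\partial z_1^3=-\tfrac{2}{27}(F_{1\bar 1})^{-11/3}\bigl(9F_{1111\bar 1}(F_{1\bar 1})^2-45F_{111\bar 1}F_{11\bar 1}F_{1\bar 1}+40(F_{11\bar 1})^3\bigr)$ you compute directly is exactly the composition of the three step-by-step antiderivatives displayed in the paper's proof. The subsequent integration in $z_1$ to a quadratic with antiholomorphic coefficients, followed by the reality argument pinning the coefficients down to quadratics in $\bar z_1$ with the conjugate-symmetric pattern, is also the paper's argument.
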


\begin{proof} Notice that
$$
\begin{array}{l}
\displaystyle\left(\frac{9F_{111\bar 1}}{(F_{1\bar 1})^{\frac{5}{3}}}-\frac{15(F_{11\bar 1})^2}{(F_{1\bar 1})^{\frac{8}{3}}}\right)_1=\frac{9 F_{1111\bar 1}(F_{1\bar 1})^2-45 F_{111\bar 1}F_{11\bar 1} F_{1\bar 1}+40 (F_{11\bar 1})^3}{(F_{1\bar 1})^{\frac{11}{3}}},\\
\vspace{-0.1cm}\\
\displaystyle\left(\frac{9 F_{11\bar 1}}{(F_{1\bar 1})^{\frac{5}{3}}}\right)_1=\frac{9F_{111\bar 1}}{(F_{1\bar 1})^{\frac{5}{3}}}-\frac{15(F_{11\bar 1})^2}{(F_{1\bar 1})^{\frac{8}{3}}},\\
\vspace{-0.1cm}\\
\displaystyle\left(-\frac{27}{2 (F_{1\bar 1})^{\frac{2}{3}}}\right)_1=\frac{9F_{11\bar 1}}{(F_{1\bar 1})^{\frac{5}{3}}}.
\end{array}
$$
Therefore $F$ is a solution of (\ref{mplxmongeeq}) if and only if 
\begin{equation}
\frac{1}{(F_{1\bar 1})^{\frac{2}{3}}}=a(\bar z_1, z_2,\bar z_2)z_1^2+b(\bar z_1,z_2,\bar z_2)z_1+c(\bar z_1,z_2,\bar z_2),\label{intthreetimesprelim}
\end{equation}
where $a,b,c$ are smooth functions. Taking into account that $(F_{1\bar 1})^{\frac{2}{3}}$ is real-valued, we see that if $F$ satisfies (\ref{intthreetimesprelim}) then each of $a$, $b$, $c$ is a polynomial in $\bar z_1$ of degree at most 2, thus (\ref{intthreetimesprelim}) is equivalent to representation (\ref{intthreetimes}).\end{proof}

We will now discuss the simplest possible situation, specifically, the case when in formula (\ref{intthreetimes}) one has $f=g=h=p=q=0$. In other words, let
$$
F_{1\bar 1}=r(z_2,\bar z_2)
$$  
or, equivalently,
$$
F=r(z_2,\bar z_2)|z_1|^2+s(z_1,z_2,\bar z_2)+\overline{s(z_1,z_2,\bar z_2)}\label{specform1}
$$
for some smooth functions $r$ and $s$. Finding the germs of all suitable functions $F$ even in this constrained situation is hard, and we let $s$ be of the form
\begin{equation}
s(z_1,z_2,\bar z_2)=t(z_2,\bar z_2)z_1^2\label{anotherc}
\end{equation}
for a smooth function $t$, so we have
\begin{equation}
F=r(z_2,\bar z_2)|z_1|^2+t(z_2,\bar z_2)z_1^2+\overline{t(z_2,\bar z_2)}\bar z_1^2.\label{veryspecform}
\end{equation}

Condition (\ref{anotherc}) is motivated by the following well-known example of the germ of a CR-flat rigid hypersurface with the graphing function $F$ as in (\ref{veryspecform}), which was given in \cite[Proposition 4.16]{FK} (see also \cite{GM}):
\begin{equation}
\Re z_3=\frac{|z_1|^2}{1-|z_2|^2}+\frac{\bar z_2}{2(1-|z_2|^2)}z_1^2+\frac{z_2}{2(1-|z_2|^2)}\bar z_1^2,\label{bestknownexample}
\end{equation}
where we have
\begin{equation}
r=\frac{1}{1-|z_2|^2},\quad t=\frac{\bar z_2}{2(1-|z_2|^2)}.\label{specialrt}
\end{equation}
Theorem \ref{rigidclassif} below contains more nontrivial examples of this kind.

For a rigid hypersurface with the graphing function $F$ of the form (\ref{veryspecform}) the totality of the Monge-Amp\`ere equation (\ref{cmplxmongeampere}) and conditions (\ref{f11}), (\ref{cmplxsnonzero}) is easily seen to be equivalent to the following set of relations:
\begin{equation}
\begin{array}{l}
r r_{2\bar 2}-|r_2|^2-4|t_{\bar 2}|^2=0,\\
\vspace{-0.1cm}\\
r t_{2\bar 2}-2r_2t_{\bar 2}=0,\\
\vspace{-0.1cm}\\
\hbox{$r>0$ everywhere,}\\
\vspace{-0.1cm}\\
\hbox{$t_{\bar 2}\ne 0$ everywhere.}
\end{array}\label{mainsystemm}
\end{equation}   
We will look for a pair of functions $r,t$ satisfying (\ref{mainsystemm}) assuming that
\begin{equation}
t_{\bar 2}=\frac{r^2}{2}.\label{reltr}
\end{equation}
Just like condition (\ref{anotherc}), this relation is motivated by example (\ref{bestknownexample}) as the functions $r,t$ from (\ref{specialrt}) satisfy it. Clearly, under constraint (\ref{reltr}) the second equation in (\ref{mainsystemm}) trivially holds and the last relation follows from the third one. Thus, (\ref{mainsystemm}) turns into
\begin{equation}
\begin{array}{l}
r r_{2\bar 2}-|r_2|^2-r^4=0,\\
\vspace{-0.1cm}\\
\hbox{$r>0$ everywhere.}\\
\end{array}\label{newshortsys}
\end{equation}

Set
$f:=\ln r$. By (\ref{newshortsys}) the function $f$ satisfies
$$
f_{2\bar 2}=e^{2f},
$$
or
\begin{equation}
\Delta f=4 e^{2f}.\label{laplace}
\end{equation}
Equation (\ref{laplace}) has many solutions, which are related to conformal metrics of constant negative curvature (see, e.g., \cite{KR}). By formulas (\ref{veryspecform}), (\ref{reltr}) every solution yields an example of the germ of a CR-flat rigid hypersurface in the class ${\mathfrak C}_{2,1}$. Thus, we see that even under constraints (\ref{s1111}), (\ref{veryspecform}), (\ref{reltr}) there are a large number of examples, and it is not clear whether they all can be found up to rigid equivalence.

We will now introduce a further assumption on the function $F$ by letting
\begin{equation}
r(z_2,\bar z_2)=R(z_2+\bar z_2)\label{tubeassumr}
\end{equation}
for some smooth positive function of one variable $R(x)$ defined near the origin. Just like (\ref{s1111}), this constraint is motivated by the tube case. As shown below, the introduction of (\ref{tubeassumr}) makes it possible to come up with an explicit partial classification.

\begin{theorem}\label{rigidclassif}
The germ of a rigid hypersurface in the class ${\mathfrak C}_{2,1}$ with the graphing function $F$ satisfying {\rm (\ref{s1111}), (\ref{veryspecform}), (\ref{reltr}), (\ref{tubeassumr})} is rigidly equivalent to the germ of one of the following:
\begin{itemize}

\item[{\rm (i)}] the tube hypersurface given by an equation of the form
$$
\Re z_3=\frac{(z_1+\bar z_1)^2}{z_2+\bar z_2+D},\quad D>0;
$$
\vspace{0.1cm}

\item[{\rm (ii)}] the hypersurface given by an equation of the form
$$
\Re z_3=\frac{z_1^2+2\sqrt{D}|z_1|^2|z_2+1|^2+\bar z_1^2}{1-D|z_2+1|^4}, \quad 0<D<1;
$$
\vspace{0.1cm}

\item[{\rm (iii)}] the hypersurface given by an equation of the form
$$
\Re z_3=\frac{i\left(e^{iD}(z_2+1)^2-e^{-iD}(\bar z_2+1)^2\right)(z_1^2+\bar z_1^2)-4|z_1|^2|z_2+1|^2}{e^{iD}(z_2+1)^2+e^{-iD}(\bar z_2+1)^2},\quad 0<D<\pi/2.
$$
\end{itemize}
\end{theorem}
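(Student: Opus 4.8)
The plan is to exploit the tube hypothesis (\ref{tubeassumr}) to reduce the governing PDE system to a single ordinary differential equation, integrate it completely, and then transport the resulting hypersurfaces to the three listed normal forms by explicit rigid maps.

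First I would observe that, once (\ref{veryspecform}) and (\ref{reltr}) are imposed, everything is controlled by the single function $r$. A direct computation gives $S=2t_{\bar 2}/r=r$, so that conditions (\ref{s1111}) and (\ref{cmplxsnonzero}) hold automatically, and by (\ref{newshortsys}) the function $r$ solves $r r_{2\bar 2}-|r_2|^2-r^4=0$ with $r>0$. Substituting the tube ansatz $r=R(z_2+\bar z_2)=R(x)$ turns this into the autonomous ordinary differential equation $R R''-(R')^2-R^4=0$, $R>0$. Given $R$, the function $t$ is recovered from (\ref{reltr}) by integrating $t_{\bar 2}=r^2/2$ in $\bar z_2$ and $F$ from (\ref{veryspecform}); the holomorphic constant of integration contributes a pluriharmonic term $2\Re(\phi(z_2)z_1^2)$ to $F$, which can be absorbed into the $h$-summand of a rigid map (\ref{rigideqmaps}). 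Thus $F$ is determined by $R$ up to rigid equivalence.

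Next I would integrate the equation. The substitution $\sigma:=1/R$ linearises it to $(\sigma')^2=1+c\,\sigma^2$ for a real constant $c$, and the sign of $c$ yields exactly three solution families: $R=1/(x+D_0)$ for $c=0$, $R=k/\sinh(kx+A)$ for $c=k^2>0$, and $R=k/\sin(kx+B)$ for $c=-k^2<0$, where a reflection and a real scaling $z_2\mapsto\mu z_2$ (both rigid) are used to normalise $k$. For $c=0$ one finds $F=-(z_1-\bar z_1)^2/(2(x+D_0))$, and a linear rigid change in $z_1$ together with a scaling of $z_3$ carries it to form (i) with $D>0$. For $c\neq 0$ the decisive move is the holomorphic change of the second variable $z_2\mapsto e^{\kappa z_2}-1$, with $\kappa=k$ when $c>0$ and $\kappa=ik$ when $c<0$: setting $w=e^{\kappa z_2}$, the hyperbolic (resp. trigonometric) sine becomes a rational function of $w,\bar w$, and $r$ acquires precisely the shape of the $|z_1|^2$-coefficient in (ii) (which depends on $|z_2+1|^4$) and in (iii) (which depends on $(z_2+1)^2,(\bar z_2+1)^2$), respectively. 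One accompanies this with a $z_2$-dependent rescaling $w_1=\lambda(z_2)z_1$ and an affine change $z_3\mapsto a z_3+h$ chosen so that the transformed $F$ keeps the form (\ref{veryspecform}); tracking the induced rule $\tilde r=a\,r/|\lambda|^2$, $\tilde t=a\,t/\lambda^2$ fixes $\lambda$, $a$ and the pluriharmonic correction.

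The crux, and the step I expect to be the main obstacle, is this last bookkeeping together with the extraction of the sharp parameter ranges. The essential point is that rigid maps must fix the origin ($f(0)=g(0)=h(0)=0$), so real translations in $z_2$ are unavailable; consequently the constant $A$ (resp. $B$) cannot be scaled away and survives as the genuine modulus $D$ of the normal form. The admissible ranges then drop out of the requirements that $r>0$ and that the denominator be nonzero at the origin: one gets $D>0$ in (i), $0<D<1$ in (ii) (from $A\in(0,\infty)$), and $0<D<\pi/2$ in (iii) (from the positivity condition $\cos D>0$ at $z_2=0$). By comparison, the ODE integration and the verification that the construction remains in ${\mathfrak C}_{2,1}$ (which reduces to $S=r\neq 0$) are routine; the one subtle point to monitor is that no solutions are lost in passing to the first integral $(\sigma')^2=1+c\sigma^2$ and in the sign/phase normalisations, and that each listed range is both necessary and attained.
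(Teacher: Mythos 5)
Your proposal is correct and follows essentially the same route as the paper's proof: reduce to the autonomous ODE $RR''-(R')^2-R^4=0$ for $R(x)$ (the paper writes it as $g''=e^{2g}$ with $g=\ln R$), pass to the first integral, split into three cases according to the sign of the integration constant, recover $t$ from (\ref{reltr}) up to a holomorphic term absorbed by a rigid map, and rationalize the exponential/trigonometric solutions via $z_2\mapsto e^{\kappa z_2}-1$ with $\kappa\in\{k,ik\}$. The only differences are cosmetic (your substitution $\sigma=1/R$ and $\sinh/\sin$ normal forms versus the paper's $\ln R$ and exponential/cosine forms), and your identification of where the modulus $D$ and its range come from matches the paper's conclusions.
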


\begin{proof}

Set $g:=\ln R$. By (\ref{newshortsys}) the function $g$ satisfies
\begin{equation}
g''=e^{2g}.\label{laplacetube}
\end{equation}
Multiplying both sides of (\ref{laplacetube}) by $g'$ and integrating, we obtain
$$
(g')^2=e^{2g}+C
$$
for some $C\in\RR$, hence 
$$
g'=\sigma\sqrt{e^{2g}+C},\label{eqtosolve}
$$
with $\sigma=\pm 1$. We will now consider three cases.
\vspace{0.1cm}

{\bf Case 1.} Let $C=0$. Then
$$
R(x)=\frac{1}{-\sigma x+D},
$$
where $D$ is a positive constant, and (\ref{reltr}) yields
$$
t(z_2,\bar z_2)=\frac{\sigma}{2(-\sigma(z_2+\bar z_2)+D)}+u(z_2),
$$
where $u$ is a holomorphic function. By (\ref{veryspecform}) we then have
$$
F=\frac{\sigma}{2(-\sigma(z_2+\bar z_2)+D)}(z_1^2+2\sigma|z_1|^2+\bar z_1^2)+2\Re(z_1^2u(z_2)). 
$$
The germ of the rigid hypersurface with the above graphing function is easily seen to be rigidly equivalent to the germ of the tube hypersurface
$$
\Re z_3=\frac{(z_1+\bar z_1)^2}{z_2+\bar z_2+D},
$$
which is the possibility given by part (i) of the theorem.
\vspace{0.1cm}

{\bf Case 2.} Let $C>0$. We then have
$$
R(x)=\frac{2\sqrt{CD}e^{\sigma\sqrt{C}x}}{1-De^{2{\sigma\sqrt{C}x}}},
$$
where $0<D<1$. From (\ref{reltr}) we thus see
$$
t(z_2,\bar z_2)=\frac{\sigma\sqrt{C}}{1-De^{2{\sigma\sqrt{C}(z_2+\bar z_2)}}}+u(z_2),
$$
where $u$ is a holomorphic function. By (\ref{veryspecform}) it follows that
$$
F=\frac{\sigma\sqrt{C}}{1-De^{2{\sigma\sqrt{C}(z_2+\bar z_2)}}}(z_1^2+2\sigma\sqrt{D} e^{\sigma\sqrt{C}(z_2+\bar z_2)}|z_1|^2+\bar z_1^2)+2\Re(z_1^2u(z_2)).
$$
The germ of the rigid hypersurface with this graphing function is rigidly equivalent to the germ of the hypersurface
$$
\Re z_3=\frac{z_1^2+2\sqrt{D}e^{z_2+\bar z_2}|z_1|^2+\bar z_1^2}{1-De^{2(z_2+\bar z_2)}}.
$$
Now, to eliminate the exponential function from the equation, we replace $e^{z_2}$ by $z_2+1$ and obtain
$$
\Re z_3=\frac{z_1^2+2\sqrt{D}|z_1|^2|z_2+1|^2+\bar z_1^2}{1-D|z_2+1|^4},
$$
which is the possibility given by part (ii).
\vspace{0.1cm}

{\bf Case 3.} Let $C<0$. Then
$$
R(x)=\frac{\sqrt{-C}}{\cos(\sigma\sqrt{-C}x+D)},
$$
where $0<D<\pi/2$, and (\ref{reltr}) implies
$$
t(z_2,\bar z_2)=\frac{\sigma\sqrt{-C}}{2}\tan(\sigma\sqrt{-C}(z_2+\bar z_2)+D)+u(z_2),
$$
where $u$ is a holomorphic function. By (\ref{veryspecform}) it follows that
$$
\begin{array}{l}
\displaystyle F=\frac{\sigma\sqrt{-C}}{2\cos(\sigma\sqrt{-C}(z_2+\bar z_2)+D)}\times\\
\vspace{-0.1cm}\\
\hspace{2cm}\displaystyle\left(\sin(\sigma\sqrt{-C}(z_2+\bar z_2)+D)(z_1^2+\bar z_1^2)+2\sigma|z_1|^2\right)+2\Re(z_1^2u(z_2)). 
\end{array}
$$
The germ of the rigid hypersurface with the above graphing function is rigidly equivalent to the germ of the hypersurface
$$
\Re z_3=\frac{\sin(z_2+\bar z_2+D)(z_1^2+\bar z_1^2)+2|z_1|^2}{\cos(z_2+\bar z_2+D)}.
$$
Finally, to eliminate the exponential function from the equation, we replace $e^{iz_2}$ by $z_2+1$ and obtain
$$
\Re z_3=\frac{i\left(e^{iD}(z_2+1)^2-e^{-iD}(\bar z_2+1)^2\right)(z_1^2+\bar z_1^2)-4|z_1|^2|z_2+1|^2}{e^{iD}(z_2+1)^2+e^{-iD}(\bar z_2+1)^2},
$$
which is the possibility given by part (iii). \end{proof}

\begin{remark}\label{remrigideq}
Despite the fact that constraints (\ref{s1111}), (\ref{tubeassumr}) are of \lq\lq tube type\rq\rq, the equations that appear in parts (ii) and (iii) of Theorem \ref{rigidclassif} are \emph{not} tube equations and it is not clear whether they can be brought to tube form by transformations of the kind specified in (\ref{rigideqmaps}).
\end{remark} 

Theorem \ref{rigidclassif} immediately implies:

\begin{corollary}\label{rational}
The germ of a rigid hypersurface in the class ${\mathfrak C}_{2,1}$ with the graphing function $F$ satisfying {\rm (\ref{s1111}), (\ref{veryspecform}), (\ref{reltr}),(\ref{tubeassumr})} is rigidly equivalent to the germ of a hypersurface with rational graphing function.
\end{corollary}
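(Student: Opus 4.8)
The plan is to deduce the corollary directly from Theorem \ref{rigidclassif} by inspecting its three normal forms. By that theorem, any germ whose graphing function $F$ satisfies the four constraints (\ref{s1111}), (\ref{veryspecform}), (\ref{reltr}), (\ref{tubeassumr}) is rigidly equivalent to the germ of one of the hypersurfaces listed in parts (i), (ii), (iii). Since the property of possessing a rational graphing function need only hold somewhere in the rigid equivalence class, it suffices to check that each of these three distinguished representatives has a rational graphing function; the claim then follows by transitivity of rigid equivalence.

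First I would rewrite each of the three right-hand sides as an honest quotient of polynomials in the independent real-analytic coordinates $z_1$, $\bar z_1$, $z_2$, $\bar z_2$. This is purely a matter of expanding the moduli: one substitutes $|z_1|^2 = z_1\bar z_1$, $|z_2+1|^2 = (z_2+1)(\bar z_2+1)$, and $|z_2+1|^4 = \bigl((z_2+1)(\bar z_2+1)\bigr)^2$. After these substitutions the numerators and denominators in (i), (ii), (iii) become polynomials in $z_1$, $\bar z_1$, $z_2$, $\bar z_2$ whose coefficients are the real constant $D$ in cases (i) and (ii) and the complex constants $e^{\pm iD}$ in case (iii). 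Hence in each case the graphing function is visibly rational.

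The only remaining point to verify is that each germ is genuinely well-defined near the origin, i.e., that the denominator does not vanish at $0$: in (i) the denominator equals $D>0$ there, in (ii) it equals $1-D>0$, and in (iii) it equals $e^{iD}+e^{-iD}=2\cos D>0$ because $0<D<\pi/2$. Thus in every case $F$ is a rational function regular at the origin, which is exactly the assertion of the corollary.

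I do not anticipate any genuine obstacle here, since Theorem \ref{rigidclassif} has already carried out the one nontrivial step—the elimination of the exponential and trigonometric functions through the substitutions $e^{z_2}\mapsto z_2+1$ and $e^{iz_2}\mapsto z_2+1$—so that each normal form is presented in a manifestly algebraic shape. The sole task left is the routine rationality check outlined above, which is why the corollary follows immediately from the theorem.
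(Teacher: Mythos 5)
Your proposal is correct and matches the paper exactly: the paper derives Corollary \ref{rational} as an immediate consequence of Theorem \ref{rigidclassif}, the point being that each of the three normal forms (i)--(iii) is visibly a quotient of polynomials in $z_1,\bar z_1,z_2,\bar z_2$. Your additional check that the denominators are nonzero at the origin is a harmless extra verification already implicit in the theorem's statement.
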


\begin{remark}\label{remrational}
It is probably not realistic to expect that one can find an explicit classification if the simplifying assumptions (\ref{s1111}), (\ref{veryspecform}), (\ref{reltr}), (\ref{tubeassumr}) of Theorem \ref{rigidclassif} are completely dropped. On the other hand, the rationality result stated in Corollary \ref{rational} might still hold, and a very interesting open question is whether it can be established in full generality. 
\end{remark}

In conclusion, we will investigate another natural situation when $r$ is expressed via a function of one variable. Namely, instead of (\ref{tubeassumr}) let us assume that 
\begin{equation}
r(z_2,\bar z_2)=R(|z_2|^2)\label{reinhaassumr}
\end{equation}
for some smooth positive function $R(x)$ defined near the origin. This constraint is motivated by example (\ref{bestknownexample}). We have the following result:

\begin{proposition}\label{rigidclassifreinh}
The germ of a rigid hypersurface in the class ${\mathfrak C}_{2,1}$ with the graphing function $F$ satisfying {\rm (\ref{s1111}), (\ref{veryspecform}), (\ref{reltr}), (\ref{reinhaassumr})} is rigidly equivalent to the germ of the hypersurface given by equation {\rm (\ref{bestknownexample})}.
\end{proposition}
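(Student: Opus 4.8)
The plan is to mimic the proof of Theorem \ref{rigidclassif}, replacing the tube-type ODE (\ref{laplacetube}) by the ordinary differential equation produced by feeding the Reinhardt ansatz (\ref{reinhaassumr}) into the system (\ref{newshortsys}). Recall that (\ref{newshortsys}) is equivalent to $f_{2\bar 2}=e^{2f}$ for $f:=\ln r$. Writing $x:=|z_2|^2$ and $h:=\ln R$, so that $f=h(x)$, a direct computation gives $f_{2\bar2}=xh''+h'$ and $e^{2f}=e^{2h}$, whence the equation becomes
\[
(x\,h')'=e^{2h},
\]
a second-order equation with a regular singular point at the origin. First I would integrate it: the substitution $x=e^{\tau}$ together with $W:=2h+\tau$ turns it into the autonomous equation $\ddot W=2e^{W}$, whose first integral is $\dot W^{2}=4e^{W}+K$ for a real constant $K$, where $e^{W}=R^{2}x$.

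The next step is to isolate the admissible solutions, namely those for which $R$ extends to a smooth positive function near $x=0$ (so that $r=R(|z_2|^2)$ is a legitimate graphing datum with $r(0)>0$). Since $e^{W}=R^{2}x\to0$ as $x\to0^{+}$, the first integral forces $\dot W^{2}\to K$, so $K\ge0$; the case $K=0$ is discarded because it makes $R$ blow up at the origin. For $K=\kappa^{2}>0$ an explicit integration (setting $p=\sqrt{4e^{W}+\kappa^2}$ and separating variables) yields
\[
R^{2}=\frac{\kappa^{2}C\,x^{\kappa-1}}{(1-C\,x^{\kappa})^{2}},
\]
and finiteness together with positivity of $R(0)$ forces the exponent to vanish, i.e. $\kappa=1$, and then $C=:\lambda>0$. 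Thus, in contrast to the three cases of Theorem \ref{rigidclassif}, here the regularity requirement collapses the solution space to the single one-parameter family
\[
R_{\lambda}(x)=\frac{\sqrt{\lambda}}{1-\lambda x},\qquad \lambda>0,
\]
with $\lambda=1$ recovering the function $r$ of (\ref{specialrt}).

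Integrating (\ref{reltr}) then gives $t_{\lambda}=\tfrac12\,\lambda\bar z_2/(1-\lambda|z_2|^2)+u(z_2)$ with $u$ holomorphic, and, exactly as in the proof of Theorem \ref{rigidclassif}, the pluriharmonic term $2\Re(z_1^{2}u(z_2))$ contributed to $F$ by $u$ is removed by the component $z_3\mapsto z_3-2z_1^{2}u(z_2)$ of a map of the form (\ref{rigideqmaps}), so I may take $u=0$. Substituting $r_{\lambda}=\sqrt\lambda/(1-\lambda|z_2|^2)$ and $t_{\lambda}=\tfrac12\lambda\bar z_2/(1-\lambda|z_2|^2)$ into (\ref{veryspecform}) and replacing $z_2$ by $\sqrt\lambda\,z_2$ in the graphing function $F_{1}$ of (\ref{bestknownexample}) shows
\[
F_{\lambda}(z_1,z_2)=\sqrt{\lambda}\,F_{1}(z_1,\sqrt{\lambda}\,z_2).
\]
Since $F_{1}$ is homogeneous of degree two in its first argument, one has $F_{1}(\lambda^{1/4}z_1,w_2)=\sqrt{\lambda}\,F_{1}(z_1,w_2)$, hence $F_{\lambda}(z_1,z_2)=F_{1}(\lambda^{1/4}z_1,\sqrt{\lambda}\,z_2)$. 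Consequently the linear map $(z_1,z_2,z_3)\mapsto(\lambda^{1/4}z_1,\sqrt{\lambda}\,z_2,z_3)$, which is of the form (\ref{rigideqmaps}), carries the germ with graphing function $F_{\lambda}$ onto the germ of (\ref{bestknownexample}), completing the argument. I expect the only delicate point to be the treatment of the regular singular point at $x=0$ in the selection step, since this is what forces $\kappa=1$ and thereby reduces the two-parameter solution family of the ODE to the single family $R_{\lambda}$; by comparison the concluding rigid equivalence is a routine scaling, provided one keeps track of the precise shape of $r_{\lambda}$ (in particular the factor $\sqrt{\lambda}$, not $\lambda$, in the numerator), which is exactly what makes the homogeneity-based scaling of $z_1$ succeed.
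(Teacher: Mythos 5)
Your argument is correct and follows the same overall strategy as the paper's proof: reduce (\ref{newshortsys}) under the ansatz (\ref{reinhaassumr}) to the ODE $(xg')'=e^{2g}$ for $g=\ln R$ (your equation $(x\,h')'=e^{2h}$ is exactly (\ref{eqlaplacerig1})), solve it, integrate (\ref{reltr}) to get $t$ up to a holomorphic summand, absorb the pluriharmonic term $2\Re(z_1^2u(z_2))$ into $z_3$, and scale away the remaining parameter. The only genuine divergence is in how the ODE is integrated. The paper multiplies (\ref{eqlaplacerig1}) by $g'x$, adds, integrates, and evaluates at $x=0$ to force the integration constant to vanish; comparison with the original equation then gives $g''=(g')^2$, which is solved directly, so the regularity at the singular point $x=0$ is exploited at the very first integration and no case analysis is needed. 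You instead autonomize via $x=e^{\tau}$, obtain the Liouville equation $\ddot W=2e^{W}$, write down the full two-parameter solution family from the first integral $\dot W^2=4e^W+K$, and only afterwards impose smoothness and positivity of $R$ at $x=0$ to discard $K\le 0$ and force $\kappa=1$; your explicit formula $R^2=\kappa^2Cx^{\kappa-1}/(1-Cx^{\kappa})^2$ checks out, as does the endgame (the resulting family is the paper's $R=e^{\beta}/(1-e^{2\beta}x)$ with $\lambda=e^{2\beta}$, and your scaling $(z_1,z_2,z_3)\mapsto(\lambda^{1/4}z_1,\sqrt{\lambda}z_2,z_3)$ is a legitimate map of the form (\ref{rigideqmaps}) carrying $F_\lambda$ to (\ref{bestknownexample})). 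Your route is longer but makes transparent exactly which boundary behaviour at the regular singular point collapses the solution space to a single one-parameter family, whereas the paper's trick is shorter but more ad hoc; both are complete.
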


\begin{proof}
Set $g:=\ln R$. By (\ref{newshortsys}) the function $g$ satisfies
\begin{equation}
g''x+g'=e^{2g},\label{eqlaplacerig}
\end{equation}
or, equivalently,
\begin{equation}
(g'x)'=e^{2g}.\label{eqlaplacerig1}
\end{equation}
Multiplying both sides of (\ref{eqlaplacerig1}) by $g'x$ we obtain
\begin{equation}
((g'x)^2)'=(e^{2g})'x.\label{eqlaplacerig2}
\end{equation}
Adding up (\ref{eqlaplacerig1}), (\ref{eqlaplacerig2}) and integrating, we see
$$
(g'x)^2+g'x=e^{2g}x+C
$$
for some $C\in\RR$. By setting $x=0$ in the above equation we observe that $C=0$, which yields
\begin{equation}
(g')^2x+g'=e^{2g}.\label{eqlaplacerig3} 
\end{equation}
Now, the comparison of (\ref{eqlaplacerig}) and (\ref{eqlaplacerig3}) implies
$$
g''=(g')^2,
$$
which leads to
\begin{equation}
g=\alpha-\ln|x+D|\label{eqlaplacerig4}
\end{equation}
for some $\alpha,D\in\RR$. By plugging (\ref{eqlaplacerig4}) into the original equation (\ref{eqlaplacerig}), we finally obtain
$$
g(x)=\beta-\ln(1-e^{2\beta}x)
$$
for some $\beta\in\RR$.

Then (\ref{reltr}) yields
$$
t(z_2,\bar z_2)=\frac{e^{2\beta}\bar z_2}{2(1-e^{2\beta}|z_2|^2)}+u(z_2),
$$
where $u$ is a holomorphic function. By (\ref{veryspecform}) it follows that
$$
F=\frac{e^{\beta}}{1-e^{2\beta}|z_2|^2}|z_1|^2+\frac{e^{2\beta}\bar z_2}{2(1-e^{2\beta}|z_2|^2)}z_1^2+\frac{e^{2\beta}z_2}{2(1-e^{2\beta}|z_2|^2)}\bar z_1^2+2\Re(z_1^2u(z_2)).
$$
The germ of the rigid hypersurface with this graphing function is easily seen to be rigidly equivalent to the germ of the hypersurface defined by equation (\ref{bestknownexample}). \end{proof}

We thus see that constraints (\ref{s1111}), (\ref{veryspecform}), (\ref{reltr}), (\ref{reinhaassumr}) do not lead to any new examples.


\begin{thebibliography}{ABC}

\bibitem[BRT]{BRT} Baouendi, M. S., Rothschild, L. P. and Treves, F., CR structures with group action and extendability of CR functions, {\it Invent. Math.} {\bf 82} (1985), 359--396.
  
\bibitem[C]{C} Cartan, \' E., Sur la g\'eometrie pseudo-conforme des hypersurfaces de l'espace de deux variables complexes: I, {\it Ann. Math. Pura Appl.} {\bf 11} (1933), 17--90; II, {\it Ann. Scuola
Norm. Sup. Pisa} {\bf 1} (1932), 333--354.

\bibitem[ES]{ES} Ezhov, V. and Schmalz, G., Explicit description of spherical rigid hypersurfaces in $\CC^2$, {\it Complex Anal. Syner.} {\bf 1}:2 (2015), DOI: 10.1186/2197-120X-1-2.

\bibitem[FK]{FK} Fels, G. and Kaup, W., CR-manifolds of dimension 5: A Lie algebra approach, {\it J. reine angew. Math.} {\bf 604} (2007), 47--71.

\bibitem[GM]{GM} Gaussier, H. and Merker, J., A new example of a uniformly Levi degenerate hypersurface in $\CC^3$, {\it Ark. Mat.} {\bf 41} (2003), 85--94; erratum {\it Ark. Mat.} {\bf 45} (2007), 269--271.

\bibitem[I1]{I1} Isaev, A. V., {\it Spherical Tube Hypersurfaces}, Lecture Notes in Mathematics {\bf 2020}, Springer, New York, 2011.

\bibitem[I2]{I2} Isaev, A. V., Affine rigidity of Levi degenerate tube hypersurfaces, {\it J. Differential Geom.} {\bf 104} (2016), 111--141.

\bibitem[I3]{I3} Isaev, A. V., On the CR-curvature of Levi degenerate tube hypersurfaces, {\it Methods Appl. Anal.} {\bf 23} (2016), 317--328.

\bibitem[IZ]{IZ} Isaev, A. and Zaitsev, D., Reduction of five-dimensional uniformly Levi degenerate CR structures to absolute parallelisms, {\it J. Geom. Anal.} {\bf 23} (2013), 1571--1605.

\bibitem[KR]{KR} Kraus, D. and Roth, O., Conformal metrics, in: \emph{Topics in Modern Function Theory}, Ramanujan Math. Soc. Lect. Notes Ser. 19, Ramanujan Math. Soc., Mysore, 2013, pp. 41--83.

\bibitem[Lan]{Lan} Landsberg, J. M., Differential-geometric characterizations of complete intersections, {\it J. Differential Geom.} {\bf 44} (1996), 32--73.

\bibitem[Las]{Las} Lasley, J. W., Jr., On Monge's differential equation, {\it Amer. Math. Monthly} {\bf 43} (1936), 284--286.

\bibitem[MS]{MS} Medori, C. and Spiro, A., The equivalence problem for five-dimensional Levi degenerate CR manifolds, {\it Int. Math. Res. Not. {\rm (}IMRN{\rm )}} (2014), 5602--5647.

\bibitem[MP]{MP} Merker, J. and Pocchiola, S., Explicit absolute parallelism for 2-nondegenerate real hypersurfaces $M^5\subset\CC^3$ of constant Levi rank 1, to appear in {\it J. Geom. Analysis}, published online, DOI: 10.1007/s12220-018-9988-3.

\bibitem[Poc]{Poc} Pocchiola, S., Explicit absolute parallelism for 2-nondegenerate real hypersurfaces $M^5\subset\CC^3$ of constant Levi rank 1, preprint, available from https://arxiv.org/abs/1312.6400.

\bibitem[Por]{Por} Porter, C., The local equivalence problem for 7-dimensional 2-nondegenerate CR manifolds whose cubic form is of conformal unitary type, preprint, available from http://arxiv.org/abs/1511.04019.

\bibitem[PZ]{PZ} Porter, C. and Zelenko, I., Absolute parallelism for 2-nondegenerate CR structures via bigraded Tanaka prolongation, preprint, available from https://arxiv.org/abs/1704.03999.

\bibitem[S]{S} Stanton, N., A normal form for rigid hypersurfaces in $\CC^2$, {\it Amer. J. Math.} {\bf 113} (1991), 877--910.

\bibitem[U]{U} Ushakov, V., The explicit general solution of trivial Monge-Amp\`ere equation, {\it Comment. Math. Helv.} {\bf 75} (2000), 125--133.

\end{thebibliography}
\end{document}